\newcommand{\noprint}[1]{}
\newcommand{\diag}{\mathop{\rm diag}\nolimits}
\newcommand{\todo}[1][\null]{\ensuremath{\clubsuit}}
\newtheorem{theorem}{Theorem}
\newtheorem{lemma}{Lemma}
\theoremstyle{definition} \newtheorem{definition}{Definition}
\newtheorem{remark}{Remark}
\begin{document}

\par\noindent {\LARGE\bf
Contractions with necessarily unbounded matrices\par}

{\vspace{4mm}\par\noindent 
Dmytro R. POPOVYCH
\par\vspace{2mm}\par}

{\vspace{2mm}\par\it
\noindent Faculty of Mechanics and Mathematics, National Taras Shevchenko University of Kyiv,\\
building 7, 2, Academician Glushkov prospectus, Kyiv, Ukraine, 03127
\par}

{\vspace{2mm}\par\noindent\rm E-mail: \it  deviuss@gmail.com
 \par}

{\vspace{7mm}\par\noindent\hspace*{5mm}\parbox{150mm}{\small
We prove that for each dimension not less than five 
there exists a contraction between solvable Lie algebras 
that can be realized only with matrices whose Euclidean norms necessarily approach infinity  
at the limit value of contraction parameter. 
Therefore, dimension five is the lowest dimension of Lie algebras between which contractions of the above kind exist.
}\par\vspace{5mm}}

\noprint{

\noindent 
{\bf Keywords:} \
contractions of Lie algebras, 
degenerations of Lie algebras,
generalized IW-contractions

\vspace{2mm}

\noindent 
{\bf MSC:} \ 17B81; 17B70
}

\noprint{QR-decomposition
\begin{verbatim}
http://en.wikipedia.org/wiki/QR_decomposition
https://en.wikipedia.org/wiki/Matrix_decomposition#Example
\end{verbatim}
}

\section{Introduction}

The study of ways for implementing contractions between Lie algebras plays an important role 
in the theory of contractions from its very outset. 
The general notion of limiting processes between Lie algebra structures was first introduced by Segal~\cite{Segal1951},
who was inspired by examples of physical theories being a limit case of others.
Contracting Lie algebras and their representations became physicists' operating tool 
after the papers by In\"on\"u and Wigner~\cite{Inonu&Wigner1953, Inonu&Wigner1954}.
They intended to consider ({\it linear}) contractions whose matrices are linear in the contraction parameter 
but in fact they merely studied contractions that can be realized, in properly chosen bases of initial and target algebras,
by diagonal matrices with only zero and first powers of the contraction parameter on their diagonals. 
These particular linear contractions are called In\"on\"u--Wigner contractions, or briefly {\it IW-contractions}.
General linear contractions were more comprehensively analyzed by Saletan~\cite{Saletan1961}; 
hence they are sometimes called {\it Saletan contractions}.
Therein a rigorous definition of contraction in terms of the right action of the general linear group on Lie brackets 
was presented, becoming conventional in physical literature.
Another generalization of IW-contractions, where the diagonal elements are allowed to be real powers of the contraction parameter,
was suggested in \cite{Doebner&Melsheimer1967}. 
To realize these $p$-contractions, called also Doebner--Melsheimer contractions or, more often, {\it generalized IW-contractions}~\cite{Hegerfeldt1967}, 
it in fact suffices to use only integer powers of the contraction parameter~\cite{Popovych&Popovych2009}.

In the course of exploring possibilities for realizing contractions naturally arises a problem on existence
of contraction matrices that have well-defined (finite) limits 
at the limit value of the contraction parameter~\cite{Weimar-Woods2000}.
The analysis of the results on contractions of real and complex Lie algebras up to dimension four
\cite{Campoamor-Stursberg2008,Nesterenko&Popovych2006,Popovych&Popovych2010} shows 
that all of these contractions can be realized by such matrices.
Is the same true for Lie algebras of higher dimension?
The first study of this problem was carried out in~\cite{Weimar-Woods2000} 
for the contraction between two specially chosen five-dimensional Lie algebras.

Consider the $n$-dimensional ($n\geqslant 5$) solvable real Lie algebras $\mathfrak a$ and $\mathfrak a_0$ 
that are defined by the following nonzero commutation relations:
\begin{gather*}
\mathfrak a\colon \quad [e_1, e_3]=e_3, \quad [e_2, e_4]=e_4, \quad [e_1, e_2]=e_5,\\
\mathfrak a_0\colon \quad [e_1, e_3]=e_3, \quad [e_2, e_4]=e_4.
\end{gather*}
Using Mubarakzyanov's classification of low-dimensional Lie algebras~\cite{Mubarakzyanov1963b}, 
these algebras can be denoted by $A_{5.38}\oplus(n-5)A_1$ and $A_{2.1}\oplus A_{2.1}\oplus(n-4)A_1$. 
Note that each five-dimensional solvable Lie algebra with one-dimensional center and three-dimensional nilradical 
is isomorphic to either $A_{5.38}$ or $A_{2.1}\oplus A_{2.1}\oplus A_1$, 
and three is the minimal dimension of nilradical for five-dimensional solvable Lie algebras.
It is obvious that the contraction $\mathfrak a\to\mathfrak a_0$ is realized 
by the diagonal matrix $U=\diag(1, 1, 1, 1, \varepsilon^{-1}, 1, \dots, 1)$, 
whose fifth diagonal entry goes to infinity as $\varepsilon\to+0$. 
The same is true for the contraction $\bar{\mathfrak a}\to\bar{\mathfrak a}_0$ between 
the complexifications $\bar{\mathfrak a}$ and $\bar{\mathfrak a}_0$ of $\mathfrak a$ and $\mathfrak a_0$.
It was shown in~\cite{Weimar-Woods2000} 
that for $n=5$ any realization of the contraction $\mathfrak a\to\mathfrak a_0$ as a generalized In\"on\"u--Wigner contraction 
necessarily involves a negative power of the contraction parameter, 
and hence some entries of the corresponding contraction matrix approach infinity at zero. 
The purpose of the present paper is to prove the following stronger and more general assertion:

\begin{theorem}\label{TheoremOnSingulatityOfContractionMatrix}
The Euclidean norm of any contraction matrix that realizes the contraction of the algebra $\mathfrak a$ to the algebra $\mathfrak a_0$ 
approaches infinity at the limit point. 
The same is true for the complex counterpart of this contraction.
\end{theorem}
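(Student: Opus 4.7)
My plan is to argue by contradiction: suppose the family $\{U_\varepsilon\}$ realizing the contraction $\mathfrak a\to\mathfrak a_0$ has bounded Euclidean norm as $\varepsilon\to 0^+$. By compactness, pass to a convergent subsequence $U_{\varepsilon_k}\to U_0$; the rest of the argument must rule out every possible limit~$U_0$. Rewriting the contraction relation as $[U_\varepsilon x,U_\varepsilon y]_{\mathfrak a}=U_\varepsilon\mu_\varepsilon(x,y)$ and passing to the limit yields $[U_0 x,U_0 y]_{\mathfrak a}=U_0[x,y]_{\mathfrak a_0}$, so $U_0$ is a Lie algebra homomorphism $\mathfrak a_0\to\mathfrak a$. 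Since $\dim[\mathfrak a,\mathfrak a]=3>2=\dim[\mathfrak a_0,\mathfrak a_0]$, the algebras are not isomorphic, so $U_0$ cannot be invertible.

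Next, set $v_j:=U_0 e_j$ and extract two structural constraints. The eigenvector-type relations $[v_1,v_3]_{\mathfrak a}=v_3$ and $[v_2,v_4]_{\mathfrak a}=v_4$, expanded in the basis of $\mathfrak a$ where only $[e_1,e_3]=e_3$, $[e_2,e_4]=e_4$, $[e_1,e_2]=e_5$ are nonzero, force $v_3,v_4\in\langle e_3,e_4\rangle$. Also, every value of $\mu_0$ lies in $[\mathfrak a_0,\mathfrak a_0]=\langle e_3,e_4\rangle$, so $U_0\mu_0(e_i,e_j)\in\langle e_3,e_4\rangle$ has vanishing $e_5$-component; since $[e_1,e_2]=e_5$ is the only bracket in $\mathfrak a$ with an $e_5$-summand, this $e_5$-component equals the $2\times 2$ minor $(U_0)_{1i}(U_0)_{2j}-(U_0)_{2i}(U_0)_{1j}$, so all these minors vanish and rows~$1$ and~$2$ of $U_0$ are proportional.

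Combining these constraints with the remaining hom relations $[v_1,v_4]=[v_2,v_3]=[v_1,v_2]=0$, a short case analysis on which components of $v_3,v_4$ are nonzero shows that if both $v_3\ne 0$ and $v_4\ne 0$ the quantities $(v_1)_1,(v_1)_2,(v_2)_1,(v_2)_2$ are forced to patterns such as $(1,0,0,1)$ that are incompatible with any row-proportionality factor. So necessarily $v_3=0$ or $v_4=0$; by the symmetry $(e_1,e_3)\leftrightarrow(e_2,e_4)$ assume $v_3=0$, i.e.\ $e_3\in\ker U_0$.

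Finally, I would rule out $v_3=0$ by an asymptotic analysis. Expanding $U_\varepsilon=U_0+\varepsilon U_1+o(\varepsilon)$ along the subsequence, $U_\varepsilon e_3=O(\varepsilon)$ and $[U_\varepsilon e_1,U_\varepsilon e_3]_{\mathfrak a}=\varepsilon[v_1,U_1 e_3]_{\mathfrak a}+O(\varepsilon^2)$. For $\mu_\varepsilon(e_1,e_3)=U_\varepsilon^{-1}[U_\varepsilon e_1,U_\varepsilon e_3]_{\mathfrak a}\to e_3\ne 0$, the operator $\varepsilon U_\varepsilon^{-1}$ must converge to a rank-one operator of the form $c\,e_3\otimes\ell_0$ (forcing $\operatorname{rank}U_0=n-1$ with $\ker U_0=\langle e_3\rangle$) and $\ell_0([v_1,U_1 e_3]_{\mathfrak a})\ne 0$. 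The same analysis applied to the limits $\mu_\varepsilon(e_1,e_2)\to 0$, $\mu_\varepsilon(e_2,e_3)\to 0$, and the pair containing $v_4$ produces further linear conditions on $\ell_0$ evaluated at vectors $[v_i,U_1 e_j]_{\mathfrak a}$, which combined with the hom relations and the Jacobi identity in $\mathfrak a$ cannot be jointly satisfied. The main obstacle is precisely this final step: extracting the genuine incompatibility from the several limiting conditions on $\ell_0$. The complex analogue of the theorem follows verbatim, since all structure constants are real and no reality of $U_\varepsilon$ is used anywhere.
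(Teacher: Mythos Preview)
Your first six steps are a clean and genuinely different approach from the paper's: passing to a subsequential limit $U_0$, recognising it as a Lie algebra homomorphism $\mathfrak a_0\to\mathfrak a$, and combining the eigenvector relations $[v_1,v_3]=v_3$, $[v_2,v_4]=v_4$ with the vanishing of all $2\times2$ minors of rows $1$ and $2$ to force $v_3=0$ or $v_4=0$ is correct and elegant. The paper instead performs an $LQ$ factorisation of each $U_p$, uses compactness of the orthogonal group to push the $Q$-part through the limit, reduces $Q$ to the identity via the automorphism group of $\mathfrak a_0$, and then solves an explicit system in the entries of the lower-triangular part $L_p$, obtaining $|l^5_{p,5}|\to\infty$ directly.

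The genuine gap is Step~7. The expansion $U_\varepsilon=U_0+\varepsilon U_1+o(\varepsilon)$ is unjustified: $\varepsilon$ is merely a label for a continuous family (and for sequential contractions even continuity is absent), so there is no reason the difference $U_{\varepsilon_k}-U_0$ should scale like $\varepsilon_k$, nor that $\varepsilon U_\varepsilon^{-1}$ should converge at all, let alone to a rank-one operator $c\,e_3\otimes\ell_0$. All you know a priori is that $U_\varepsilon e_3\to0$ at some uncontrolled rate, while the smallest singular value of $U_\varepsilon$ tends to $0$ at a possibly different rate; nothing forces $\operatorname{rank}U_0=n-1$ either. Without a controlled rate, the ``linear conditions on $\ell_0$'' you announce cannot even be formulated, and the claimed final incompatibility is simply asserted. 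This is precisely the difficulty the paper's $LQ$ reduction sidesteps: triangularity localises the problem to finitely many algebraic identities in the entries $l^i_{p,j}$, from which $l^1_{p,1}l^2_{p,2}/l^5_{p,5}\to0$ drops out without any asymptotic expansion. If you want to rescue your approach, you would need to replace the $\varepsilon$-expansion by a rate-free argument (e.g.\ normalising $U_\varepsilon e_3$ and passing to a further subsequence), and then actually carry out the case analysis on the resulting limits---which, as the paper's computation suggests, is where the real work lies.
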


In other words, for any dimension $n\geqslant 5$ Theorem~\ref{TheoremOnSingulatityOfContractionMatrix} constructively gives a positive answer to the question
whether there exist contractions between $n$-dimensional Lie algebras 
that can be realized only by unbounded matrices, 
and dimension five is the lowest dimension for which contractions of the above kind exist.

We additionally show that, up to automorphisms of the algebra~$\mathfrak a$, 
the Euclidean norm of the tuple formed by the $(5,5)$th, \dots, $(5,n)$th entries of any contraction matrix in the chosen bases of the algebras $\mathfrak a$ and $\mathfrak a_0$ 
approaches infinity at the limit point of the contraction parameter. 
In particular, in the case $n=5$ it is the $(5,5)$th entry of a contraction matrix whose absolute value goes to infinity.  

Both Theorem~\ref{TheoremOnSingulatityOfContractionMatrix} and the last claim are directly extended to the complex case.

\section{Auxiliary results}
\label{SectionOnAuxiliaryResults}

Given a finite-dimensional vector space $V$ over the field~$\mathbb F=\mathbb R$ or~$\mathbb F=\mathbb C$, 
by $\mathcal L_n=\mathcal L_n(\mathbb F)$ we denote the set of all possible Lie brackets on~$V$, where $n=\dim V<\infty$.
Each element~$\mu$ of~$\mathcal L_n$ corresponds to a Lie algebra with the underlying space~$V$, $\mathfrak g=(V,\mu)$.
Fixing a basis $\{e_1,\dots,e_n\}$ of~$V$ leads to a bijection between $\mathcal L_n$ and
\[
\mathcal C_n=\{(c_{ij}^k)\in\mathbb F^{n^3}\mid c_{ij}^k+c_{ji}^k=0,\,
c_{ij}^{i'\!}c_{i'\!k}^{k'\!}+c_{ki}^{i'\!}c_{i'\!j}^{k'\!}+c_{jk}^{i'\!}c_{i'\!i}^{k'\!}=0\}.
\]
The structure constant tensor $(c_{ij}^k)\in\mathcal C_n$ associated with a Lie bracket $\mu\in\mathcal L_n$
is given by the formula $\mu(e_i,e_j)=c_{ij}^ke_k$.
Here and in what follows, the indices $i$, $j$, $k$, $i'$, $j'$ and $k'$ run from 1 to $n$
and the summation convention over repeated indices is assumed.
The right action of the group~${\rm GL}(V)$ on $\mathcal L_n$, which is conventional for the physical literature,
is defined~as
\[
(U\cdot\mu)(x,y)=U^{-1}\bigl(\mu(Ux,Uy)\bigr)\quad \forall U\in {\rm GL}(V),\forall \mu\in\mathcal L_n,\forall x,y\in V.
\]

\begin{definition}\label{DefOfContractions1}
Given a Lie bracket~$\mu\in\mathcal L_n$ and a continuous matrix function $U\colon (0,1]\to {\rm GL}(V)$, 
we construct the parameterized family of Lie brackets~$\mu_\varepsilon=U_\varepsilon\cdot\mu$, $\varepsilon \in (0,1]$.
Each Lie algebra $\mathfrak g_\varepsilon=(V,\mu_\varepsilon)$ is isomorphic to $\mathfrak g=(V,\mu)$.
If the limit 
\[
\lim\limits_{\varepsilon \to +0}\mu_\varepsilon(x,y)=
\lim\limits_{\varepsilon \to +0}U_\varepsilon{}^{-1}\mu(U_\varepsilon x,U_\varepsilon y)=:\mu_0(x,y)
\]
exists for any $x, y\in V$, then $\mu_0$ is a well-defined Lie bracket.
The Lie algebra $\mathfrak g_0=(V,\mu_0)$ is called a \emph{one-parametric continuous contraction}
(or simply a \emph{contraction}) of the Lie algebra~$\mathfrak g$.
We call a limiting process that provides $\mathfrak g_0$ from~$\mathfrak g$ with a matrix function 
a \emph{realization} of the contraction $\mathfrak g\to\mathfrak g_0$.
\end{definition}

The notion of contraction is extended to the case an arbitrary algebraically closed field
in terms of orbit closures in the variety of Lie brackets
\cite{Burde1999,Burde2005,Burde&Steinhoff1999,Grunewald&Halloran1988,Lauret2003}.

If a basis~$\{e_1, \ldots, e_n\}$ of~$V$ is fixed, 
then the operator $U_\varepsilon$ can be identified with its matrix $U_\varepsilon\in{\rm GL}_n(\mathbb F)$, 
which is denoted by the same symbol,
and Definition~\ref{DefOfContractions1} can be reformulated in terms of structure constants.
Let $C=(c^k_{ij})$ be the tensor of structure constants of the algebra~$\mathfrak g$ in the basis chosen.
Then the tensor $C_\varepsilon=({c}^k_{\varepsilon,ij})$ of structure constants of the algebra~$\mathfrak g_\varepsilon$ in this basis is the result of 
the action by the matrix~$U_\varepsilon$ on the tensor~$C$, 
$C_\varepsilon=C\circ U_\varepsilon$. 
In term of components this means that
\[c^k_{\varepsilon,ij}=(U_\varepsilon)^{i'}_i(U_\varepsilon)^{j'}_j(U_\varepsilon{}^{-1})^k_{k'}c^{k'}_{i'\!j'}.\]
Then Definition~\ref{DefOfContractions1} is equivalent to that the limit
\[\lim\limits_{\varepsilon\to+0}c^k_{\varepsilon,ij}=:c^k_{0,ij}\]
exists for all values of $i$, $j$ and $k$ and, therefore,
$c^k_{0,ij}$ are components of the well-defined structure constant tensor~$C_0$ of the Lie algebra~$\mathfrak g_0$.
The parameter $\varepsilon$ and the matrix-function $U_\varepsilon$ are called a \emph{contraction parameter} and a \emph{contraction matrix},
respectively.

Sequential contractions~\cite{Weimar-Woods2000} are defined analogously to continuous contractions 
using matrix sequences, $\{U_p,\,p\in\mathbb N\}\subset{\rm GL}(V)$, instead of continuous matrix functions.
For each Lie bracket from the sequence $\{\mu_p=U_p\cdot\mu,\,p\in\mathbb N\}$,
the Lie algebra $\mathfrak g_p=(V,\mu_p)$ is isomorphic to $\mathfrak g=(V,\mu)$.
If the limit
\[
\lim\limits_{p \to\infty}\mu_p(x,y)=
\lim\limits_{p \to\infty}U_p{}^{-1}\mu(U_p x,U_p y)=:\mu_0(x,y)
\]
exists for any $x, y\in V$, then $\mu_0$ is a well-defined Lie bracket on $V$.
The Lie algebra $\mathfrak g_0=(V,\mu_0)$ is called a \emph{sequential contraction} of the Lie algebra~${\mathfrak g}$.
Within the basis-dependent approach, each algebra $\mathfrak g_p$ is associated with the structure constant tensor $C_p=C\circ U_p$ with 
the components $c^k_{p,ij}=(U_p)^{i'}_i(U_p)^{j'}_j(U_p{}^{-1})^k_{k'}c^{k'}_{i'\!j'}$.
The existence of the above limit of~$\{\mu_p\}$ is equivalent to the existence of the limit
\[\lim\limits_{p\to\infty}c^k_{p,ij}=:c^k_{0,ij}\]
for all values of $i$, $j$ and $k$, where 
$c^k_{0,ij}$ are components of the structure constant tensor~$C_0$ of the Lie algebra~$\mathfrak g_0$.

Any continuous contraction from $\mathfrak g$ to $\mathfrak g_0$ gives an infinite family of matrix sequences resulting in 
sequential contractions from $\mathfrak g$ to $\mathfrak g_0$.
More precisely, if $U_\varepsilon$ is the matrix of the continuous contraction and
the sequence $\{\varepsilon_p,\, p\in\mathbb N\}$ satisfies the conditions $\varepsilon_p\in(0,1]$, $\varepsilon_p\to+0$, $p\to\infty$,
then еру matrix sequence $\{U_{\varepsilon_p},\, p\in\mathbb N\}$ generates a sequential contraction from $\mathfrak g$ to $\mathfrak g_0$. 

\noprint{
Conversely, if a sequence $\{U_p,\,p\in\mathbb N\}\subset{\rm GL}(V)$ defines a sequential contraction from $\mathfrak g$ to $\mathfrak g_0$ 
(and the sign of $\det U_p$ is the same for all $p\in\mathbb N$ if $\mathbb F=\mathbb R$)
then there exists a one-parametric continuous contraction from $\mathfrak g$ to $\mathfrak g_0$ with 
the associated continuous function $\tilde U\colon (0,1]\to {\rm GL}(V)$ such that $\tilde U_{1/p}=U_p$ for any $p\in\mathbb N$. 
The simple proof of this fact involves logarithms and exponents of matrices and, additionally,  
the polar decomposition in the real case. 
}

Definitions of special types of contractions, statements on properties and their proofs in the case of sequential contractions 
can be easily obtained via reformulation of those for the case of continuous contractions. 
It is enough to replace continuous parametrization by discrete one.
\noprint{
The replacement is invertible.
}

The following useful assertion is obvious. 

\begin{lemma}
If the matrix~$U_\varepsilon$ of a contraction $\mathfrak g\to\mathfrak g_0$ can be 
represented in the form $U_\varepsilon=\hat U_\varepsilon\check U_\varepsilon$, where 
$\hat U$ and $\check U$ are continuous functions from $(0,1]$ to ${\rm GL}_n(\mathbb F)$ and 
the function~$\check U$ has a limit~$\check U_0\in{\rm GL}_n(\mathbb F)$ at $\varepsilon \to +0$,
then $\hat U_\varepsilon\check U_0$ also is a matrix of the contraction $\mathfrak g\to\mathfrak g_0$.
\end{lemma}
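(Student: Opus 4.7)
The plan is to reduce the claim to the composition rule for the right action together with continuity of matrix multiplication. First I would record the identity
\[
(AB)\cdot\mu = B\cdot(A\cdot\mu),
\]
valid for all $A,B\in{\rm GL}(V)$ and $\mu\in\mathcal L_n$. This is immediate from the definition, since $(AB)^{-1}\mu((AB)x,(AB)y) = B^{-1}\bigl(A^{-1}\mu(A(Bx),A(By))\bigr) = B^{-1}\bigl((A\cdot\mu)(Bx,By)\bigr)$.

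Next I would introduce $W_\varepsilon:=\check U_\varepsilon{}^{-1}\check U_0$, so that $\hat U_\varepsilon\check U_0 = U_\varepsilon W_\varepsilon$. By continuity of inversion and multiplication in ${\rm GL}_n(\mathbb F)$, the hypothesis $\check U_\varepsilon\to\check U_0\in{\rm GL}_n(\mathbb F)$ gives $W_\varepsilon\to I_n$ (and $W_\varepsilon{}^{-1}\to I_n$) as $\varepsilon\to+0$. Using the composition rule,
\[
(\hat U_\varepsilon\check U_0)\cdot\mu = (U_\varepsilon W_\varepsilon)\cdot\mu = W_\varepsilon\cdot(U_\varepsilon\cdot\mu) = W_\varepsilon\cdot\mu_\varepsilon.
\]

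Finally I would read off structure constants: the components of $W_\varepsilon\cdot\mu_\varepsilon$ in the fixed basis are
\[
(W_\varepsilon)^{i'}_{i}(W_\varepsilon)^{j'}_{j}(W_\varepsilon{}^{-1})^{k}_{k'}\,c^{k'}_{\varepsilon,i'\!j'},
\]
and as $\varepsilon\to+0$ this tensor converges entrywise to $\delta^{i'}_i\delta^{j'}_j\delta^k_{k'}c^{k'}_{0,i'\!j'}=c^k_{0,ij}$, because the matrix factors tend to the identity and $c^{k'}_{\varepsilon,i'\!j'}\to c^{k'}_{0,i'\!j'}$ by hypothesis on the original contraction. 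Thus the limit of the brackets $(\hat U_\varepsilon\check U_0)\cdot\mu$ is exactly $\mu_0$, so $\hat U_\varepsilon\check U_0$ realizes the same contraction.

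There is essentially no obstacle here; the only point that needs a little care is that one is multiplying two families, one converging and one possibly unbounded, so one must be sure to isolate the convergent factor (namely $W_\varepsilon$) before taking the limit. The argument uses nothing beyond the associativity of the ${\rm GL}(V)$-action and joint continuity of the bilinear map $(A,\mu)\mapsto A\cdot\mu$ for fixed dimension.
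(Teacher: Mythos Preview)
Your argument is correct; the paper itself gives no proof, stating only that the assertion is obvious. What you have written is exactly the computation that makes the obviousness explicit.
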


The same is true for sequential contractions.
We will need a more particular lemma, which is related to the $LQ$ matrix decomposition 
and is in fact a computational counterpart of Proposition~1.7 from~\cite{Grunewald&Halloran1988} 
for the real and complex cases.

\begin{lemma}\label{LemmaOnTriAndOrthPartsOfContractionMatrix}
A Lie algebra $\mathfrak g$ is sequentially contracted to a Lie algebra $\mathfrak g_0$ if and only if in the fixed basis~$\{e_1, \ldots, e_n\}$ 
of the underlying space $V$ there exists the sequence $\{L_p,\,p\in\mathbb N\}$ 
of nondegenerate lower triangular $n\times n$ matrices and an orthogonal (resp.\ unitary) $n\times n$ matrix $Q$ in the real (resp.\ complex) case 
such that $C\circ L_p\to C_0\circ Q$ as $p\to\infty$. 
\end{lemma}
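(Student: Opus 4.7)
My plan is to combine two standard ingredients: the $LQ$ matrix decomposition together with compactness of the orthogonal group $O(n)$ in the real case (resp.\ the unitary group $U(n)$ in the complex case). The key identity I will use throughout is $C\circ(AB)=(C\circ A)\circ B$, which follows from $(AB)\cdot\mu=B\cdot(A\cdot\mu)$ and is immediate from the associativity/right-action structure defined in the excerpt.

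For the ``only if'' direction, I would start from a sequence $\{U_p\}\subset{\rm GL}_n(\mathbb F)$ realizing the sequential contraction $\mathfrak g\to\mathfrak g_0$, so that $C\circ U_p\to C_0$. Applying the $LQ$ decomposition for each $p$ yields $U_p=L_p\tilde Q_p$ with $L_p$ lower triangular and nondegenerate (since both $U_p$ and $\tilde Q_p$ are invertible) and $\tilde Q_p$ orthogonal (resp.\ unitary). By compactness of $O(n)$ (resp.\ $U(n)$), some subsequence $\tilde Q_{p_k}$ converges to a matrix $\tilde Q$ of the same group. The right action $(C,V)\mapsto C\circ V$ is polynomial in the entries of $V$ and $V^{-1}$, and inversion is continuous on the compact group, hence
$C\circ L_{p_k}=(C\circ U_{p_k})\circ \tilde Q_{p_k}^{-1}\to C_0\circ \tilde Q^{-1}$.
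Setting $Q:=\tilde Q^{-1}$ and relabelling the subsequence as $\{L_p\}$ delivers the claimed representation.

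For the ``if'' direction, suppose $C\circ L_p\to C_0\circ Q$ is given. I will simply set $U_p:=L_pQ^{-1}$ and compute
$C\circ U_p=(C\circ L_p)\circ Q^{-1}\to(C_0\circ Q)\circ Q^{-1}=C_0\circ I=C_0$,
which exhibits $\{U_p\}$ as a realizing sequence of the sequential contraction $\mathfrak g\to\mathfrak g_0$. I do not foresee any substantive obstacle; the only point requiring a bit of care is that passing to a subsequence of $\tilde Q_p$ does not destroy the contraction property, which is immediate from the definition of sequential contraction as existence of some realizing sequence. All other ingredients---existence of the $LQ$ decomposition on ${\rm GL}_n(\mathbb F)$, compactness of $O(n)$ or $U(n)$, and continuity of the polynomial action---are entirely standard.
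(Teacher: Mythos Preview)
Your proposal is correct and follows essentially the same route as the paper: an $LQ$ factorization of each $U_p$, compactness of $O(n)$ (resp.\ $U(n)$) to extract a convergent subsequence of the orthogonal factors, and the identity $C\circ(AB)=(C\circ A)\circ B$ to pass to the limit, with $Q$ taken as the inverse/transpose of the limiting orthogonal matrix. The paper omits the converse direction as obvious; your explicit verification via $U_p:=L_pQ^{-1}$ is exactly what is intended.
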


\begin{proof}
Using the sequential realization of contractions, we prove the lemma only for the real case 
since the complex case is considered in a similar way with replacing orthogonal matrices by unitary ones.
Let $\{U_p,\,p\in\mathbb N\}$ be a sequence of matrices that realize the contraction $\mathfrak g\to\mathfrak g_0$, 
i.e. $C\circ U_p\to C_0$, $p\to\infty$.
For each $p$, we decompose the matrix $U_p$ into triangular and orthogonal multipliers, $U_p=L_pQ_p$,
where $L_p$ is a lower triangular matrix and~$Q_p$ is an orthogonal matrix.
As the set of $n\times n$ orthogonal matrices is compact in the Euclidean topology, the sequence $\{Q_p,\,p\in\mathbb N\}$ contains
a convergent subsequence. 
Any subsequence of a matrix sequence realizes the same sequential contraction as the whole sequence.
Therefore, without loss of generality, we can assume that the sequence $\{Q_p\}$ itself is convergent. 
Its limit $Q_0$ is also an orthogonal matrix.
Since $C\circ U_p\to C_0$, $Q_p^{\,\rm T}\to Q_0^{\,\rm T}$ and the matrices $Q_p$ are orthogonal, 
\[C\circ L_p=C\circ L_p^{}Q_p^{}Q_p^{\,\rm T}=C\circ U_p^{}Q_p^{\,\rm T}\to C_0\circ Q_0^{\,\rm T}\] 
as $p\to\infty$.
We denote $Q_0^{\,\rm T}$ by~$Q$, completing the proof of the lemma.
\end{proof}

\begin{remark}\label{RemarkOnTriAndOrthPartsOfContractionMatrix}
The sequence of triangular matrices $\{L_p,\,p\in\mathbb N\}$ and the orthogonal matrix~$Q$ 
are defined in Lemma~\ref{LemmaOnTriAndOrthPartsOfContractionMatrix} up to the transformation
\[
\tilde L_p=M_pL_pD_p, \quad \tilde Q=KQD_0,
\]
where 
$K$ is the matrix of an orthogonal automorphism of~$\mathfrak g_0$, 
$D_0$ is a diagonal orthogonal (resp. unitary) matrix in the real (resp. complex) case, 
$M_p$ for each $p\in\mathbb N$ is the matrix of an automorphism of~$\mathfrak g$,
and the sequence of the triangular matrices $\{D_p,\,p\in\mathbb N\}$ approaches the matrix $D_0$.
\end{remark}

\section{Proof}
\label{SectionOnProof}

We prove Theorem~\ref{TheoremOnSingulatityOfContractionMatrix} in the real case.
For the complex case, orthogonal matrices should be replaced by unitary ones, and the other differences are indicated explicitly.

First we consider an arbitrary sequential realization of the contraction $\mathfrak a\to\mathfrak a_0$ with
a matrix sequence $\{U_p,\,p\in\mathbb N\}$. 
If we suppose that the Euclidean norm of $U_p$ does not approach infinity, then the sequence $\{U_p\}$
contains a bounded subsequence $\{U_{p_s},\,s\in\mathbb N\}$. 
Following the proof of Lemma~\ref{LemmaOnTriAndOrthPartsOfContractionMatrix},
we factorize each matrix $U_{p_s}$ into its lower triangular and orthogonal parts,
choose a subsequence of elements of $\{U_{p_s}\}$ with convergent orthogonal parts
and apply the algebraic limit theorem.
As a result, we construct a bounded sequence of lower triangular matrices and an orthogonal matrix $Q$, 
which satisfy Lemma~\ref{LemmaOnTriAndOrthPartsOfContractionMatrix} for $\mathfrak g=\mathfrak a$ and $\mathfrak g_0=\mathfrak a_0$.
At the same time, as we will see below, the sequence of Euclidean norms of such triangular matrices necessarily approaches infinity.
The contradiction obtained means that the Euclidean norm of $U_p$ approaches infinity. 

Suppose that there exists a continuous realization of the contraction $\mathfrak a\to\mathfrak a_0$ with
a continuous function $U\colon (0,1]\to {\rm GL}(V)$ 
for which the Euclidean norm of its values~$U_\varepsilon$ does not go to infinity as $\varepsilon\to+0$.
Then we can choose a sequence $\{\varepsilon_p,\,p\in\mathbb N\}\subset(0,1]$ such that 
its limit equals zero and the matrix sequence $\{U_{\varepsilon_p},\,p\in\mathbb N\}$ is bounded. 
As the last sequence realizes a sequential contraction $\mathfrak a\to\mathfrak a_0$, 
this immediately leads to a contradiction.

Given the above, it suffices to prove that for any sequence $\{L_p=(l^i_{p,j}),\,p\in\mathbb N\}$ of lower triangular matrices (and orthogonal matrix $Q=(q^i_j)$\,)
satisfying Lemma~\ref{LemmaOnTriAndOrthPartsOfContractionMatrix} for $\mathfrak g=\mathfrak a$ and $\mathfrak g_0=\mathfrak a_0$
the corresponding sequence of Euclidean norms goes to infinity. 

Let us look into the constraints on the matrix~$Q$. We denote the structure constant tensors of the algebras $\mathfrak a$ and $\mathfrak a_0$
in the chosen basis $\{e_1,\ldots,e_n\}$ of the underlying vector space by $C=(c_{ij}^k)$ and $C_0=(c_{0,ij}^k)$ respectively, 
Then $C_p=C\circ L_p$ and $\tilde C_0=C_0\circ Q$ are the structure constant tensors of the algebras $\mathfrak a_p$ and $\tilde{\mathfrak a}_0$ 
that are isomorphic to the algebras $\mathfrak a$ and $\mathfrak a_0$ with respect to the operators~$L_p$ and~$Q$.
By the construction, $\lim_{p\to\infty} c_{p,ij}^k=\tilde c_{0,ij}^k$.  
Since for any $i$, $j$, $k$ and $j^*=5,\dots,n$ we have that $c_{ij^*}^k=c_{ij}^1=c_{ij}^2=0$ and, for any $p$, $l^j_{p,i}=0$ if $i<j$, 
then $c_{p,ij^*}^k=c_{p,ij}^1=c_{p,ij}^2=0$ holds true for any $i$, $j$, $k$ and $p$. 
Hence the same is true for elements of~$\tilde C_0$, $\tilde c_{0,ij^*}^k=\tilde c_{0,ij}^1=\tilde c_{0,ij}^2=0$.  
At the same time, the corresponding components of $C_0$ also vanish by the definition of $\mathfrak a_0$. 
Geometrically, this means that $Q\langle e_5,\dots,e_n\rangle=\langle e_5,\dots,e_n\rangle$ and $Q\langle e_3, e_4\rangle\subset\langle e_3,\dots,e_n\rangle$. 
As the matrix~$Q$ is orthogonal, then it is a block diagonal matrix of the form
\begin{gather}\label{EqFormOfOrtLimitMatrixFor5DimSingularContraction}
Q=\begin{pmatrix}q^1_1&q^1_2\\q^2_1&q^2_2\end{pmatrix}\oplus\begin{pmatrix}q^3_3&q^3_4\\q^4_3&q^4_4\end{pmatrix}
\oplus\begin{pmatrix}q^{i^*}_{j^*}\end{pmatrix}, \quad\mbox{where}\quad i^*,j^*=5,\dots,n.
\end{gather}
There are three more values of the triplet $(i,j,k)$, namely $(1,4,3)$, $(2,4,3)$ and $(2,3,3)$,
for which the structure constants $c_{ij}^k$, $c_{p,ij}^k$ (for all values of~$p$) and hence $\tilde c_{ij}^k$ vanish.
In other words, we obtain the equations
\[
\begin{array}{ll}\arraycolsep=0ex
\tilde c_{14}^3=q^1_1q^3_3q^3_4+q^2_1q^4_3q^4_4=0,\quad &(q^1_1\bar q^3_3q^3_4+q^2_1\bar q^4_3q^4_4=0),
\\[1ex]
\tilde c_{24}^3=q^1_2q^3_3q^3_4+q^2_2q^4_3q^4_4=0,\quad &(q^1_2\bar q^3_3q^3_4+q^2_2\bar q^4_3q^4_4=0),
\\[1ex]
\tilde c_{23}^3=q^1_2(q^3_3)^2+q^2_2(q^4_3)^2=0,\quad &(q^1_2\bar q^3_3q^3_3+q^2_2\bar q^4_3q^4_3=0).
\end{array}
\]
In the brackets we present the corresponding equations for the complex case, and the bar denotes the complex conjugation.
Because of $q^1_1q^2_2-q^1_2q^2_1\ne0$, the first two equations imply that $q^3_3q^3_4=q^4_3q^4_4=0$. 
Combining the orthogonality of~$Q$ with the above equations gives the following two possibilities:

\medskip

1. $q^3_3=q^4_4=0$. Then $q^3_4q^4_3\ne0$, $q^1_1=q^2_2=0$ and $q^1_2q^2_1\ne0$.

\medskip

2. $q^3_3q^4_4\ne0$. Then $q^3_4=q^4_3=0$, $q^1_2=q^2_1=0$ and $q^1_1q^2_2\ne0$.

\medskip

The corresponding forms of the matrix $Q$ are
\[
Q=\begin{pmatrix}0&q^1_2\\q^2_1&0\end{pmatrix}\oplus\begin{pmatrix}0&q^3_4\\q^4_3&0\end{pmatrix}\oplus\begin{pmatrix}q^{i^*}_{j^*}\end{pmatrix}
\quad\mbox{and}\quad 
Q=\begin{pmatrix}q^1_1&0\\0&q^2_2\end{pmatrix}\oplus\begin{pmatrix}q^3_3&0\\0&q^4_4\end{pmatrix}\oplus\begin{pmatrix}q^{i^*}_{j^*}\end{pmatrix}.
\]
Recall that the matrix~$Q$ is defined up to the multiplication by the matrix of an orthogonal automorphism of~$\mathfrak a_0$ from the left
and by an orthogonal diagonal matrix from the right, cf. Remark~\ref{RemarkOnTriAndOrthPartsOfContractionMatrix}.
The change of the basis $(\tilde e_1, \tilde e_2, \tilde e_3, \tilde e_4, \tilde e_5,\dots, \tilde e_n)=(e_2, e_1, e_4, e_3, e_5,\dots, e_n)$, 
which is an orthogonal automorphism of the algebra~$\mathfrak a_0$, reduces the first case to the second one. 
In the second case the matrix~$Q$ can be made diagonal by the orthogonal automorphism 
\[
(\tilde e_1, \tilde e_2, \tilde e_3, \tilde e_4)=(e_1, e_2, e_3, e_4),\quad 
\tilde e_{j^*}=e_{i^*}q^{i^*}_{j^*}
\] 
of the algebra~$\mathfrak a_0$.
Therefore, it suffices to consider only the case of~$Q$ being the identity matrix., i.e., $\tilde C_0=C_0$.

For values of the triplet $(i,j,k)$ with $i,j,k=1,\dots,5$ that have not been used yet, we represent the conditions $\lim_{p\to\infty} c_{p,ij}^k=c_{0,ij}^k$  
in the form
\[
c_{p,ij}^k:=l^{i'}_{p,i}l^{j'}_{p,j}\hat l^k_{p,k'}c_{i'\!j'}^{k'}=c_{0,ij}^k+o_{p,ij}^k, 
\]
where $\hat L_p=(\hat l^i_{p,j})=L_p^{-1}$ denotes the inverse of the matrix~$L_p$
and $\lim_{p\to\infty}o_{p,ij}^k=0$.
The algebra~$\mathfrak a$ is the sum of the ideal spanned by the first five basis elements of this algebra and the abelian ideal spanned by the other basis elements. 
The matrix $L_p$ is lower triangular.
This is why the expressions for the structure constants $c_{p,ij}^k$ with $i,j,k=1,\dots,5$ 
do not involve entries~$l^i_{p,j}$ of the matrix~$L_p$ where $i>5$ or $j>5$.
As a result, we derive a system of equations on $l^i_{p,j}$ and $o_{p,ij}^k$ with $i,j,k=1,\dots,5$ 
(in what follows we generally omit the subscript~$p$ for concise presentation),
\begin{gather*}
l^1_1 = 1+o_{13}^3,
\quad
l^2_2 = 1+o_{24}^4, 
\quad
l^2_1 = o_{14}^4, 
\quad
l^1_1\frac{l^3_2}{l^3_3} = o_{12}^3, 
\quad
l^2_2\frac{l^4_3}{l^4_4} = o_{23}^4, 
\quad
-l^2_2\frac{l^5_4}{l^5_5} = o_{24}^5, 
\\
-l^2_2\frac{l^4_1}{l^4_4}+l^2_1\frac{l^4_2}{l^4_4}-l^1_1\frac{l^3_2}{l^3_3}\frac{l^4_3}{l^4_4} = o_{12}^4, 
\quad
-l^1_1\frac{l^5_3}{l^5_5}+(l^1_1-l^2_1)\frac{l^4_3}{l^4_4}\frac{l^5_4}{l^5_5} = o_{13}^5, 
\\
-l^2_1\frac{l^5_4}{l^5_5} = o_{14}^5, 
\quad
-l^2_2\frac{l^4_3}{l^4_4}\frac{l^5_4}{l^5_5} = o_{23}^5, 
\quad
-(l^1_1-l^2_1)\frac{l^4_3}{l^4_4} = o_{13}^4, 
\\
\frac{l^1_1l^2_2}{l^5_5}-l^1_1\frac{l^3_2}{l^3_3}\frac{l^5_3}{l^5_5}-\left(
-l^2_2\frac{l^4_1}{l^4_4}+l^2_1\frac{l^4_2}{l^4_4}-l^1_1\frac{l^3_2}{l^3_3}\frac{l^4_3}{l^4_4}
\right)\frac{l^5_4}{l^5_5} = o_{12}^5. 
\end{gather*}
We solve the equations in the first two rows with respect to 
$l^3_2$, $l^4_3$, $l^5_4$, $l^4_1$ and $l^5_3$ and substitute the obtained expressions 
into the last equation, which gives 
\[
\frac{l^1_1l^2_2}{l^5_5} = o_{12}^5-\frac{o_{24}^5}{l^2_2}o_{12}^4
-\left(o_{13}^5+\frac{l^1_1-l^2_1}{(l^2_2)^2}o_{23}^4o_{24}^5\right)\frac{o_{12}^3}{l^1_1}. 
\]
The last equality obviously implies 
that $l^1_{p,1}l^2_{p,2}/l^5_{p,5}\to0$, i.e., $|l^5_{p,5}|\to\infty$ as $p\to\infty$.
Therefore, the sequence of Euclidean norms of the matrices~$L_p$, $p\in\mathbb N$, also goes to infinity. 
Note that the equations in the third row of the system do not lead to additional constraints for entries of~$L_p$, 
and the sixth and eight equations imply that $l^5_{p,4}/l^5_{p,5}\to0$ and  $l^5_{p,3}/l^5_{p,5}\to0$ as $p\to\infty$.

Now we additionally show that, up to automorphisms of the algebra~$\mathfrak a$, 
the Euclidean norm of the tuple formed by $(5,5)$th, \dots, $(5,n)$th entries of any contraction matrix 
in the chosen bases of the algebras $\mathfrak a$ and $\mathfrak a_0$ 
goes to infinity at the limit point of the contraction parameter. 

Given a sequential contraction $\mathfrak a\to\mathfrak a_0$ with a matrix sequence $\{U_p,\,p\in\mathbb N\}$, 
we again factorize each matrix~$U_p$ into its lower triangular and orthogonal parts $L_p$ and $Q_p$, $U_p=L_pQ_p$.
As the limit of any convergent subsequence of $\{Q_p,\,p\in\mathbb N\}$ 
has the form~\eqref{EqFormOfOrtLimitMatrixFor5DimSingularContraction}, 
for each such subsequence and hence for the entire sequence $\{Q_p,\,p\in\mathbb N\}$ 
we have that $q^i_{p,j}\to0$ as $p\to\infty$ if $i=1,\dots,4$ and $j=5,\dots,n$ or if $i=5,\dots,n$ and $j=1,\dots,4$.
For the corresponding subsequences of $\{L_p,\,p\in\mathbb N\}$ 
the limits $|l^5_{p,5}|\to\infty$, $l^5_{p,4}/l^5_{p,5}\to0$ and $l^5_{p,3}/l^5_{p,5}\to0$ as $p\to\infty$ hold true. 
Hence, the same limits hold true for the whole sequence $\{L_p,\,p\in\mathbb N\}$ (otherwise, we obtain a contradiction).
Using Remark~\ref{RemarkOnTriAndOrthPartsOfContractionMatrix}, for each~$p$ we multiply the matrix~$L_p$ from the left by 
the matrix 
\[
M_p=E-\frac1{l^1_{p,1}}\left(l^5_{p,1}-\frac{l^2_{p,1}}{l^2_{p,2}}l^5_{p,2}\right)E^5_1-\frac{l^5_{p,2}}{l^2_{p,2}}E^5_2,
\]
which is associated with an automorphism of~$\mathfrak a$.
Here $E$ denotes the $n\times n$ identity matrix and
$E^i_j$ denotes the $n\times n$ matrix with the unit entry on the cross of the $i$-th row and the $j$-th column and zero otherwise.
The entries $\tilde l^5_{p,1}$ and $\tilde l^5_{p,2}$ of the matrix $\tilde L_p=M_pL_p$ are equal to zero. 
Then for the $(5,j)$th entries of the matrix $\tilde U_p=\tilde L_pQ_p=M_pU_p$ with $j\geqslant 5$ we have
\begin{gather*}
\lim_{p\to\infty}\sum_{j=5}^n\left((\tilde U_p)^5_j\right)^2=
\lim_{p\to\infty}\sum_{j=5}^n\left(\tilde l^5_{p,3}q^3_{p,j}+\tilde l^5_{p,4}q^4_{p,j}+\tilde l^5_{p,5}q^5_{p,j}\right)^2
\\\qquad{}
=\lim_{p\to\infty}(\tilde l^5_{p,5})^2\sum_{j=5}^n\left(\frac{\tilde l^5_{p,3}}{\tilde l^5_{p,5}}q^3_{p,j}+\frac{\tilde l^5_{p,4}}{\tilde l^5_{p,5}}q^4_{p,j}+q^5_{p,j}\right)^2
=\lim_{p\to\infty}(\tilde l^5_{p,5})^2\sum_{j=5}^n\left(q^5_{p,j}\right)^2
\\\qquad{}
=\lim_{p\to\infty}(\tilde l^5_{p,5})^2
=\infty.
\end{gather*}
We additionally use the facts that $\sum_{j=1}^nq^5_{p,j}q^5_{p,j}=1$ and $q^5_{p,j}\to 0$ as $p\to\infty$ if $j<5$.

The proof for the case of continuous contractions is similar. 
The only additional feature is continuity with respect to the contraction parameter~$\varepsilon$. 
The Gram--Schmidt process applied to the contraction matrix~$U_\varepsilon$ leads to a factorization  
in which both the lower triangular and orthogonal parts~$L_\varepsilon$ and~$Q_\varepsilon$ are continuous matrix-functions of~$\varepsilon$. 
Then the corresponding automorphism~$M_\varepsilon$ of~$\mathfrak a$ that annuls the $(5,1)$th and $(5,2)$th entries of~$L_\varepsilon$ 
is also continuous with respect to~$\varepsilon$, which implies the continuity of $\tilde U_\varepsilon=M_\varepsilon U_\varepsilon$.

\section{Conclusion}

We have constructed a single example of the solvable Lie algebras $\mathfrak a$ and $\mathfrak a_0$ for each dimension greater than four 
such that the contraction $\mathfrak a\to\mathfrak a_0$ cannot be realized by a bounded matrix-function. 
Moreover, we have showed that, up to automorphisms of the algebras $\mathfrak a$ and $\mathfrak a_0$, 
the Euclidean norm of the tuple formed by $(5,5)$th, \dots, $(5,n)$th entries of any contraction matrix 
in the chosen bases of the algebras $\mathfrak a$ and $\mathfrak a_0$ 
necessarily approaches infinity at the limit point of the contraction parameter. 

The proof of Theorem~\ref{TheoremOnSingulatityOfContractionMatrix} involves several techniques.
The first step in managing the contraction matrix is to factorize it into lower triangular and orthogonal parts 
and then apply Lemma~\ref{LemmaOnTriAndOrthPartsOfContractionMatrix} in order to move the orthogonal part from under the limit 
to the contracted structure constants. 
Due to the special structure of the considered Lie algebras it is possible to prove 
that the orthogonal part is an automorphism matrix of the contracted algebra~$\mathfrak a_0$ 
and hence can be set to the identity matrix, which is neglected.
For each fixed $(i,j,k)$ we consider the difference between the corresponding transformed and contracted structure constants
as a new unknown value, which should approach zero. 
This reduces the limit relations between the structure constants 
to the system of algebraic equations in entries of the lower triangular part and new vanishing values.
For the completion of the proof, it suffices to find out that the obtained algebraic equations for $i,j,k=1,\dots, 5$
involve only entries of the lower triangular part and new vanishing values with indices that run in the same range. 
The algebraization of the limit relations between the structure constants 
and considering a subsystem of algebraic equations that does not depend on the dimension~$n$ 
allow for the verification of all computations using a computer program.

It is not understandable yet what properties lead to the above phenomenon, which does not appear in lower dimensions.
We can only note that in the case $n=5$ the contraction $\mathfrak a\to\mathfrak a_0$ is direct, i.e. there is no intermediate algebra $\tilde{\mathfrak a}_0$ 
such that $\mathfrak a\to\tilde{\mathfrak a}_0$ and $\tilde{\mathfrak a}_0\to\mathfrak a_0$ are well-defined proper contractions.
This follows from the fact that the derivation algebras of $\mathfrak a$ and $\mathfrak a_0$ are of dimensions six and seven, respectively,
and any contraction leads to the increase of the dimension of the derivation algebra.

Since this is the first example in the literature, it is not clear how common are contractions with necessarily unbounded contraction matrices.
At the same time, we have no reason to assume the above phenomenon unique, and we could guess that the number of such contractions grows 
when dimension of Lie algebras increases.

We can pose one more problem related to the subject considered. 
Given a generalized IW-contraction that necessarily involves negative powers of the contraction parameter,
does there exist a realization of this contraction with a bounded matrix-function of another kind?

\section*{Acknowledgements}
The author is grateful to Dietrich Burde, Anatoliy Petravchuk and Roman Popovych for helpful discussions.

\end{document}